\documentclass[11pt]{article}

\usepackage[utf8]{inputenc}
\usepackage[T1]{fontenc}
\usepackage{amsmath,amssymb,amsthm}
\usepackage{geometry}
\usepackage{hyperref}
\usepackage{enumitem}
\usepackage{float}
\usepackage{tikz}
\usetikzlibrary{arrows.meta,positioning,calc,fit,backgrounds}

\newcommand{\doi}[1]{\href{https://doi.org/#1}{doi:#1}}

\hypersetup{
  colorlinks=true,
  linkcolor=blue,
  citecolor=blue,
  urlcolor=blue
}

\tikzset{
  state/.style={circle,draw,line width=0.8pt,minimum size=7mm,inner sep=1pt},
  recurrent state/.style={circle,draw,line width=0.8pt,minimum size=7mm,inner sep=1pt},
  graph edge/.style={-{Stealth[length=2mm]},line width=0.8pt},
  two way/.style={<->,>=Stealth,line width=0.8pt},
  simplex point/.style={circle,fill=black,inner sep=1.5pt},
  unrealized point/.style={circle,draw,line width=0.8pt,inner sep=1.4pt}
}

\newtheorem{proposition}{Proposition}[section]
\newtheorem{definition}[proposition]{Definition}
\newtheorem{problem}[proposition]{Problem}
\newtheorem{remark}[proposition]{Remark}

\title{Occupation Patterns and Parikh Images in Markov Support Dynamics}

\author{
Luis Pousa\\
Faculty of Engineering and Business Technology\\
Universidad Intercontinental de la Empresa (UIE)\\
A Coruña, Spain\\
\texttt{luis.pousa@uie.edu}\\
ORCID: 0009-0008-8217-5555
}

\date{}

\begin{document}
\maketitle

\begin{abstract}
The directed support graph of a discrete-time, time-homogeneous Markov chain naturally defines a regular language whose words are the admissible trajectories from a fixed initial state. Applying the Parikh map associates with each trajectory its occupation vector, recording how many times each state is visited while disregarding the chronological order of the visits. Equivalently, each trajectory determines a monomial whose exponents are its occupation numbers, yielding a natural commutative representation of occupation patterns.

For each trajectory length \(n\), we define the occupation ideal generated by the corresponding Parikh monomials. The minimal generators of this ideal are in one-to-one correspondence with the distinct occupation patterns realized at that length, providing an algebraic encoding of the combinatorial structure of admissible trajectories. This construction naturally gives rise to three complementary measures of support complexity describing reachability growth, trajectory growth, and occupation-pattern growth.

The resulting framework connects Markov support graphs, regular languages, Parikh images, and monomial ideals, establishing occupation ideals as a new algebraic object associated with Markov support dynamics. It provides a natural interface between stochastic processes, formal language theory, and combinatorial commutative algebra, and suggests new algebraic and geometric approaches to the study of support dynamics.
\end{abstract}

\noindent\textbf{Keywords.}
Markov chains;
support graphs;
occupation patterns;
Parikh images;
regular languages;
monomial ideals;
occupation ideals.

\medskip
\noindent\textbf{MSC 2020.}
60J10;
05C38;
68Q45;
13F20;
68R15.

\section{Introduction}
\label{sec:introduction}

Discrete-time Markov chains are among the central stochastic models in probability theory and its applications \cite{levin2017,meyn2009,norris1997}. A time-homogeneous Markov chain on a finite state space $S$ is determined by a transition matrix $Q=(q_{ij})$, where
\[
q_{ij}=\mathbb{P}(X_{n+1}=j\mid X_n=i)
\]
is independent of $n$. The powers $Q^n$ describe the evolution of the process through its transition probabilities and provide the standard probabilistic description of the dynamics.

In this paper we focus on the combinatorial structure underlying these dynamics. This structure is determined by the directed support graph of the chain. Associated with $Q$ is the support adjacency matrix $B=(b_{ij})$, defined by
\[
b_{ij}=
\begin{cases}
1,& q_{ij}>0,\\
0,& q_{ij}=0.
\end{cases}
\]
The matrix $Q$ records probabilistic weights, whereas $B$ records which transitions are possible. Thus, the support graph retains the full structure of admissible trajectories while disregarding their probabilities.

The starting point of the paper is that the directed support graph naturally defines a language of admissible trajectories. Fix an initial state $i$. Every admissible trajectory
\[
\gamma=(i=i_0,i_1,\ldots,i_n)
\]
may be regarded as the word
\[
w_\gamma=i_0i_1\cdots i_n
\]
over the alphabet $S$. For finite state spaces, the set of all finite trajectories from a fixed initial state forms a regular language. Indeed, it is recognized by the finite automaton naturally associated with the support graph \cite{eilenberg1974,hopcroft2006}. Although the main definitions and results are stated for finite Markov chains, the same viewpoint applies, at each fixed time level, to locally finite support graphs.

We study the occupation profiles realized by these admissible trajectories. Applying the Parikh map \cite{parikh1966} to the support language records, for each trajectory, how many times each state is visited while forgetting the chronological order of the visits. Consequently, the Parikh image of the support language is precisely the set of occupation vectors realizable from the chosen initial state.

Equivalently, if $S=\{1,\ldots,m\}$, every admissible trajectory determines the monomial
\[
m_\gamma=x_1^{\alpha_1}\cdots x_m^{\alpha_m},
\]
where $\alpha_s$ is the number of visits to state $s$ along $\gamma$. This monomial is the commutative image of the corresponding trajectory word. It preserves the occupation data encoded by the corresponding Parikh vector while identifying trajectories that differ only in the temporal order of their visits.

This leads to the main algebraic construction of the paper. Fix an initial state $i$ and an integer $n\geq0$. We define the occupation ideal
\[
J_n(i),
\]
whose minimal generators are in one-to-one correspondence with the occupation patterns realized by admissible trajectories of length $n$ starting at $i$. Thus, the occupation ideal provides a commutative-algebraic representation of the occupation patterns at time $n$.

The construction distinguishes three complementary measures of support complexity. The reachability growth function $g_i(n)$ records the spatial expansion of the support graph. The trajectory growth function $p_i(n)$ counts admissible ordered trajectories. The occupation-pattern growth function $a_i(n)$ counts the distinct occupation vectors that remain after chronological information has been discarded.

These quantities capture different aspects of the same support dynamics. A graph may have slow reachability growth while admitting exponentially many trajectories. Conversely, many trajectories may collapse to the same occupation vector. The passage from trajectories to occupation vectors therefore provides a natural compression of temporal information, preserving state occupations while forgetting their chronological order.

Several motivations support this approach. Occupation measures play a fundamental role in the study of Markov chains \cite{levin2017,pitman1977}. Here we study their support-level combinatorial analogue, namely the occupation vectors realizable independently of their probabilities. Since admissible trajectories form a regular language, their Parikh image is semilinear by Parikh's theorem \cite{esparza2011,parikh1966}. This places occupation patterns within a classical geometric and combinatorial framework.

The monomial representation brings this structure into the setting of combinatorial commutative algebra. Unlike classical path ideals, whose generators encode ordered paths, the occupation ideals introduced in Section~\ref{sec:occupation-ideals} are generated by Parikh vectors and therefore identify trajectories with the same occupation profile \cite{he2010,kubitzke2014}. This provides an algebraic representation of support-level occupation data and makes it possible to formulate questions about Hilbert functions, Newton polyhedra, Betti numbers and related invariants in this context.

The paper is organized as follows. Section~\ref{sec:support-graphs} introduces support graphs and reachability ideals. Section~\ref{sec:parikh} develops Markov support languages and their Parikh images. Section~\ref{sec:occupation-ideals} defines occupation ideals and interprets their minimal generators as occupation patterns. Section~\ref{sec:compression} establishes elementary compression and growth properties, including the rationality of the occupation-pattern generating function. Section~\ref{sec:examples} illustrates the construction through finite irreducible chains, reducible chains and random walks. Finally, Section~\ref{sec:perspectives} discusses the geometric and algebraic interpretation of the construction and outlines several directions for future work.

The main contribution of the paper is the introduction of occupation patterns as a combinatorial object associated with Markov support dynamics, together with their representation through Parikh images and monomial ideals. This perspective connects Markov chains, formal language theory and combinatorial commutative algebra through the common structure of admissible trajectories.

\section{Support Graphs and Reachability}
\label{sec:support-graphs}

Let $Q=(q_{ij})$ be the transition matrix of a discrete-time, time-homogeneous Markov chain on a finite state space $S$. We associate with $Q$ its support adjacency matrix $B=(b_{ij})$, defined by
\[
b_{ij}=
\begin{cases}
1,& q_{ij}>0,\\
0,& q_{ij}=0.
\end{cases}
\]
The corresponding directed support graph is
\[
G_B=(S,E),
\]
where
\[
E=\{(i,j)\in S\times S:\, b_{ij}=1\}.
\]
Thus, $(i,j)\in E$ if and only if the transition $i\to j$ is admissible. The graph $G_B$ records the combinatorial structure of the chain by retaining the possible transitions and disregarding their probabilities \cite{godsil2001}.

Fix an initial state $i\in S$. For each $n\geq0$, define
\[
A_n(i)=\{j\in S:\exists\,k\leq n\text{ such that }(B^k)_{ij}>0\}.
\]
The set $A_n(i)$ consists of the states reachable from $i$ in at most $n$ steps. Equivalently, it is the ball of radius $n$ around $i$ in the directed reachability structure determined by $G_B$.

To encode this expansion algebraically, let
\[
R=k[x_s:s\in S]
\]
be the polynomial ring over a field $k$ with one variable associated with each state. We define the reachability ideal
\[
I_n(i)=\langle x_j:j\in A_n(i)\rangle\subseteq R.
\]

Since
\[
A_n(i)\subseteq A_{n+1}(i),
\]
the reachability ideals form an ascending chain
\[
I_0(i)\subseteq I_1(i)\subseteq I_2(i)\subseteq\cdots,
\]
which eventually stabilizes because the state space is finite. This filtration records the progressive expansion of the accessible region of the support graph.

The reachability growth function is
\[
g_i(n)=|A_n(i)|.
\]
It measures how many states can be reached from the initial state within the prescribed time horizon. This is the first, and simplest, support-level invariant considered in the paper; trajectory counts and occupation-pattern counts will refine this description in the following sections.

\section{Markov Support Languages and Parikh Images}
\label{sec:parikh}

The directed support graph defines a formal language of admissible trajectories. Every trajectory
\[
\gamma=(i=i_0,i_1,\ldots,i_n)
\]
starting from a fixed initial state $i$ may be regarded as the word
\[
w_\gamma=i_0i_1\cdots i_n
\]
over the alphabet $S$.

For each $n\geq0$, let $L_i^{(n)}$ denote the set of admissible trajectories of length $n$ starting at $i$, viewed as words. We define the corresponding Markov support language by
\[
L_i=\bigcup_{n\geq0}L_i^{(n)}.
\]

Since $G_B$ is finite, the language $L_i$ is regular and is recognized by the nondeterministic finite automaton induced by the support graph \cite{eilenberg1974,hopcroft2006}. The number of admissible trajectories of length $n$ starting at $i$ will be denoted by
\[
p_i(n).
\]
Equivalently,
\[
p_i(n)=\sum_{j\in S}(B^n)_{ij}.
\]
Thus, $p_i(n)$ measures the growth of the ordered trajectory space.

We now pass from ordered trajectories to occupation data. To each word $w\in L_i$ we associate its Parikh vector
\[
\Psi(w)=(\alpha_1,\ldots,\alpha_m),
\]
where $\alpha_s=|w|_s$ is the number of occurrences of the state $s$ in $w$. The Parikh map forgets the order of the symbols and retains only their multiplicities. In the present setting, $\Psi(w_\gamma)$ records exactly how many times each state is visited along the trajectory $\gamma$.

The Parikh image of the support language is
\[
P_i=\Psi(L_i).
\]
It is precisely the set of occupation vectors realized by admissible trajectories starting from $i$.

The finite-time components
\[
P_i^{(n)}=\Psi(L_i^{(n)})
\]
will play a central role below. They consist of the occupation vectors realized by trajectories of length $n$. These finite sets form the basic combinatorial objects encoded by the occupation ideals introduced in Section~\ref{sec:occupation-ideals}.

\begin{proposition}
\label{prop:parikh-semilinear}

The set
\[
P_i=\{\Psi(w):w\in L_i\}\subseteq\mathbb{N}^{|S|}
\]
is semilinear.
\end{proposition}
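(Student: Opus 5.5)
The plan is to combine two facts already noted in Section~\ref{sec:parikh}: the support language $L_i$ is regular, and Parikh's theorem \cite{parikh1966} says the Parikh image of any context-free language, hence of any regular language, is semilinear. Recall that a subset of $\mathbb{N}^{|S|}$ is semilinear if it is a finite union of linear sets $v_0+\mathbb{N}v_1+\cdots+\mathbb{N}v_r$.

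The first step is to make regularity of $L_i$ explicit rather than cite it. I would build a deterministic automaton whose states are $S\cup\{q_0,\bot\}$, with start state $q_0$ and alphabet $S$. From $q_0$, reading the letter $i$ leads to state $i$, and every other letter leads to the sink $\bot$. From a state $j\in S$, reading a letter $k$ leads to $k$ when $b_{jk}=1$ and to $\bot$ otherwise. All states except $q_0$ and $\bot$ are accepting. An induction on word length shows that a nonempty word is accepted exactly when it has the form $i_0i_1\cdots i_n$ with $i_0=i$ and $b_{i_{t}i_{t+1}}=1$ for every $t$. These are precisely the words $w_\gamma$ of admissible trajectories, so $L_i$ is regular.

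The second step is to apply Parikh's theorem to conclude that $P_i=\Psi(L_i)$ is semilinear. If a self-contained argument is preferred, I would instead use the regular-language version directly. Every path in the support graph starting at $i$ decomposes into a simple path from $i$ together with finitely many simple cycles inserted along it. Hence $P_i$ is a finite union, over simple paths $\pi$ from $i$ and over sets $C$ of simple cycles that, together with $\pi$, visit a connected set of vertices, of the sets $\Psi(\pi)+\sum_{c\in C}(\Psi(c)+\mathbb{N}\Psi(c))$. Here $\Psi(c)$ counts the vertices of the cycle $c$ once each. Each of these sets is linear.

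The main obstacle in the self-contained route is the bookkeeping in this decomposition. The Parikh vector of a trajectory counts vertex visits, and the vector $\Psi(\pi)$ already includes the starting vertex $i$. One must check that inserting a cycle $c$ based at a vertex already on the walk adds exactly $\Psi(c)$, with the base vertex counted once per traversal, so that the linear-set description is exact. One must also verify that any combination of cycle multiplicities in which every cycle of $C$ is used at least once is realizable. The connectivity condition guarantees this. Since the paper already cites Parikh's theorem and the reference \cite{esparza2011}, I would present the short route, the explicit automaton followed by Parikh's theorem, and mention the cycle decomposition only as an explanation of where the periods come from.
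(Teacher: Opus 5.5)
Your proposal is correct and follows the same route as the paper: you show that $L_i$ is regular and then apply Parikh's theorem. Your explicit deterministic automaton on $S\cup\{q_0,\bot\}$ makes concrete the automaton induced by the support graph that the paper cites, and the simple-path-plus-cycles decomposition you sketch is a valid elementary alternative that this argument does not need.
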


\begin{proof}
The language $L_i$ is regular, since it is recognized by the nondeterministic finite automaton induced by the support graph. Therefore, Parikh's theorem implies that its Parikh image is semilinear \cite{esparza2011,parikh1966}. Hence $P_i$ is a semilinear subset of $\mathbb{N}^{|S|}$.
\end{proof}

\section{Occupation Ideals}
\label{sec:occupation-ideals}

Let
\[
\gamma=(i=i_0,i_1,\ldots,i_n)
\]
be an admissible trajectory. Its occupation vector is
\[
\alpha(\gamma)=(\alpha_s)_{s\in S},
\]
where
\[
\alpha_s=\#\{r:i_r=s\}.
\]
Thus, $\alpha(\gamma)$ records how many times each state is visited along the trajectory while disregarding the order in which those visits occur.

Equivalently, we associate with $\gamma$ the monomial
\[
m_\gamma=\prod_{s\in S}x_s^{\alpha_s}
=x^{\Psi(w_\gamma)}.
\]
This monomial is the commutative image of the trajectory word under the Parikh map and provides an algebraic representation of its occupation vector.

Although the occupation vector and the associated monomial contain exactly the same information, the monomial representation offers two conceptual advantages. First, it provides a sparse representation of occupation data, since variables corresponding to states that are never visited simply do not appear. Second, it embeds occupation patterns naturally into the polynomial ring, where algebraic operations such as multiplication and divisibility, as well as the theory of monomial ideals, become immediately available. Thus, no information is lost in passing from occupation vectors to monomials, while the same combinatorial data are placed within a substantially richer algebraic framework.

For each $n\geq1$, we define the corresponding occupation ideal
\[
J_n(i)=\langle m_\gamma:\gamma\in L_i^{(n)}\rangle.
\]
Thus, $J_n(i)$ is generated by the monomials associated with all admissible trajectories of length $n$ starting from the initial state $i$. Distinct trajectories with the same occupation pattern contribute the same generator, so the ideal encodes the occupation patterns realized at time $n$.

The occupation-pattern growth function is defined by
\[
a_i(n)=\#\{\Psi(w):w\in L_i^{(n)}\}.
\]
Equivalently, $a_i(n)$ counts the number of distinct occupation vectors realized by admissible trajectories of length $n$.

\begin{proposition}
\label{prop:occupation-generators}
All minimal generators of $J_n(i)$ have degree $n+1$. Consequently,
\[
a_i(n)=\mu(J_n(i)),
\]
where $\mu(J_n(i))$ denotes the number of minimal monomial generators of $J_n(i)$.
\end{proposition}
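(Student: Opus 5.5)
The plan is to use two standard facts about monomial ideals. First, a monomial ideal generated by a finite set $G$ of monomials has a unique minimal monomial generating set, consisting of those elements of $G$ not divisible by any other element of $G$. Second, among monomials of the same total degree, divisibility coincides with equality. The whole proof reduces to checking that every generator $m_\gamma$, for $\gamma\in L_i^{(n)}$, has the same degree $n+1$.

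\textbf{Step 1: the degree of each generator.} For a trajectory $\gamma=(i_0,\ldots,i_n)$ we have
\[
\deg m_\gamma=\sum_{s\in S}\alpha_s=\sum_{s\in S}\#\{r:i_r=s\}=n+1,
\]
because the $n+1$ positions $r=0,\ldots,n$ are partitioned according to the state occupied at each one. Thus the generating set
\[
G_n=\{m_\gamma:\gamma\in L_i^{(n)}\}
\]
is finite, since $S$ is finite, and homogeneous of degree $n+1$.

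\textbf{Step 2: no generator is redundant.} If $u,v\in G_n$ and $u\mid v$, then $v=u\,w$ for some monomial $w$ with $\deg w=0$. Hence $w=1$ and $u=v$. So no element of $G_n$ divides a different element of $G_n$.

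\textbf{Step 3: identifying the minimal generators.} By the standard characterization, the minimal monomial generating set of $J_n(i)$ is the set of divisibility-minimal elements of $G_n$. By Step 2, this is all of $G_n$. To justify the characterization itself, note that any monomial in $J_n(i)$ is divisible by some element of $G_n$. Consequently, any monomial generating set must contain, for each $u\in G_n$, a monomial dividing $u$ that is itself divisible by some $v\in G_n$. Then $v\mid u$, so Step 2 forces the chain to collapse to $u$. Hence every minimal generator has degree $n+1$.

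\textbf{Step 4: counting.} The map $\Psi(w_\gamma)\mapsto x^{\Psi(w_\gamma)}=m_\gamma$ is a bijection from $P_i^{(n)}$ onto $G_n$, since a monomial determines its exponent vector. Therefore
\[
\mu(J_n(i))=|G_n|=|P_i^{(n)}|=a_i(n).
\]

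There is no real obstacle in this argument. The only point requiring care is Step 3, where we invoke, or briefly rederive, the uniqueness of minimal monomial generating sets (Dickson/Gordan-type uniqueness). This is what ensures that $\mu$ is well defined, so that the count $|G_n|$ is intrinsic to the ideal and does not depend on the chosen generating set.
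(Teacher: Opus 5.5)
Your proof is correct and follows the same route as the paper. Both arguments show that every $m_\gamma$ has degree $n+1$, use the fact that distinct monomials of equal degree cannot divide one another to conclude that the distinct $m_\gamma$ form the minimal generating set, and then match these monomials bijectively with $P_i^{(n)}$; your Step~3 additionally justifies the uniqueness of the minimal monomial generating set, which the paper takes as standard.
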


\begin{proof}
Every admissible trajectory of length $n$ visits exactly $n+1$ states, counting multiplicities. Hence every generator $m_\gamma$ has total degree $n+1$. Since distinct monomials of the same degree cannot properly divide one another, the minimal generating set of $J_n(i)$ consists precisely of the distinct monomials arising from admissible trajectories. These monomials are in one-to-one correspondence with the distinct occupation vectors realized by trajectories in $L_i^{(n)}$. Therefore,
\[
a_i(n)=\mu(J_n(i)).
\]
\end{proof}

Together with the reachability growth function $g_i(n)$ and the trajectory growth function $p_i(n)$ introduced in the previous sections, the function $a_i(n)$ completes the three measures of support complexity considered in this paper:
\[
g_i(n),\qquad
p_i(n),\qquad
a_i(n),
\]
corresponding respectively to reachability growth, trajectory growth and occupation-pattern growth.

The passage
\[
p_i(n)\longrightarrow a_i(n)
\]
may be interpreted as a compression of trajectory information. Distinct admissible trajectories may determine the same occupation vector and therefore the same monomial generator. The occupation ideals record precisely the information that survives after the temporal order of the trajectories has been discarded.

\section{Parikh Compression}
\label{sec:compression}

The occupation ideals introduced in the previous section naturally raise two questions. How much information is lost when trajectories are replaced by their occupation vectors? And how does the number of occupation patterns grow as the trajectory length increases? This section addresses these questions.

Since every occupation pattern is realized by at least one admissible trajectory,
\[
a_i(n)\leq p_i(n).
\]
The inequality reflects the fact that the Parikh map forgets chronological order: distinct trajectories may determine the same occupation vector. We refer to this phenomenon as \emph{Parikh compression} \cite{parikh1966,to2010}.

\begin{definition}
\label{def:parikh-compression}
Whenever $p_i(n)>0$, the \emph{Parikh compression ratio} at time $n$ is defined by
\[
C_i(n)=\frac{a_i(n)}{p_i(n)}.
\]
\end{definition}

The compression ratio $C_i(n)$ quantifies the amount of combinatorial information preserved under the Parikh map. Equivalently, it quantifies the loss of chronological information when admissible trajectories are identified through their occupation patterns.

The following remark clarifies the role of directed cycles in Parikh compression.

\begin{remark}
\label{rem:acyclic-compression}
If the support graph is acyclic, every admissible trajectory visits each state at most once. Hence the Parikh map is injective on admissible trajectories of fixed length, and therefore
\[
a_i(n)=p_i(n)
\]
for every $n\ge1$.

Thus, directed cycles are necessary for nontrivial Parikh compression. They are not sufficient, however. For example, consider the support graph
\[
1\longrightarrow2,\qquad
2\longrightarrow3,\qquad
3\longrightarrow2.
\]
Starting from state $1$, there is exactly one admissible trajectory of each length. Hence
\[
a_1(n)=p_1(n)
\]
for every $n$, despite the presence of the directed cycle
\[
2\longrightarrow3\longrightarrow2.
\]
Compression therefore depends not only on the existence of directed cycles, but also on the combinatorial structure of admissible trajectories.
\end{remark}

The previous remark identifies a qualitative condition for the absence of Parikh compression. The next remark complements it with a simple quantitative bound on trajectory and occupation-pattern growth.

\begin{remark}
\label{rem:degree-bound}
If every state of the support graph has out-degree at most $d$, then
\[
a_i(n)\leq p_i(n)\leq d^n
\]
for every $n\geq0$.

Each step of an admissible trajectory offers at most $d$ possible continuations, while every occupation pattern is realized by at least one trajectory.
\end{remark}

This simple bound already shows that trajectory growth and occupation-pattern growth may differ substantially. Even when the number of admissible trajectories grows exponentially, many of them may collapse to the same occupation pattern under the Parikh map.

\subsection{Generating Function of Occupation-Pattern Growth}
\label{subsec:generating-function}

The preceding results show that occupation-pattern growth is constrained by trajectory growth. A stronger structural property follows from the semilinearity of the Parikh image.

\begin{proposition}
\label{prop:rational-generating-function}
Let $G_B$ be a finite support graph and fix an initial state $i$. Then the generating function
\[
A_i(t)=\sum_{n\geq0}a_i(n)t^n
\]
is rational.
\end{proposition}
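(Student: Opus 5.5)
Since every Parikh vector in $P_i^{(n)}$ has total coordinate sum $n+1$ (Proposition~\ref{prop:occupation-generators}), the length $n$ is recovered from the vector $\alpha$ as $|\alpha|-1$. Consequently $a_i(n)$ equals the number of points of $P_i$ lying on the hyperplane $\{\alpha:\sum_s\alpha_s=n+1\}$, and
\[
A_i(t)=\sum_{\alpha\in P_i} t^{|\alpha|-1}.
\]
The plan is to prove that the right-hand side is rational by combining Proposition~\ref{prop:parikh-semilinear} with the structure theory of semilinear sets. The map $\alpha\mapsto |\alpha|$ is linear. We therefore need to count the points of a semilinear set along the fibres of a linear functional, without double counting.

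The key step is to use the classical refinement of Parikh/semilinearity, due to Ito and Eilenberg--Schützenberger, which states that every semilinear subset of $\mathbb{N}^m$ is a \emph{finite disjoint union of unambiguous linear sets}. Each such set has the form $c+\mathbb{N}p_1+\cdots+\mathbb{N}p_r$, where $p_1,\dots,p_r$ are linearly independent, so that every element has a unique representation. For such a set $L$, uniqueness of representation gives
\[
\sum_{\alpha\in L}t^{|\alpha|}=\frac{t^{|c|}}{\prod_{j=1}^r\bigl(1-t^{|p_j|}\bigr)}.
\]
This is rational because each period $p_j$ is nonzero with nonnegative entries, so $|p_j|\ge1$. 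Summing over the finitely many disjoint pieces and dividing by $t$ shows that $A_i(t)$ is rational. The constant term needs no separate treatment, since $a_i(0)=1$ corresponds to the single vector $e_i$ with $|e_i|=1$.

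The main obstacle is the disjoint, unambiguous decomposition. Plain semilinearity only provides a possibly overlapping union of linear sets whose periods may be dependent, and naive summation would overcount. If I do not want to cite the unambiguous decomposition theorem directly, I would argue by inclusion--exclusion instead. Semilinear sets form a Boolean algebra, being exactly the Presburger-definable sets, so the pieces can be made disjoint. Each linear set with dependent periods can then be rewritten as a finite disjoint union of linear sets with independent periods, which is the standard argument of Eilenberg--Schützenberger. As an alternative route, the Presburger-definable set $\{(n,\alpha):\alpha\in P_i^{(n)}\}$ projects injectively onto $n$-slices. The counting function of a parametric Presburger family is a quasi-polynomial for large $n$ (Woods), and a quasi-polynomial in $n$ yields a rational generating function. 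I would present the unambiguous-linear-set argument as the main proof and mention the other route in a remark.
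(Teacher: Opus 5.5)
Your proposal is correct and follows essentially the same route as the paper: semilinearity of $P_i$ via Parikh's theorem, then the Eilenberg--Sch\"utzenberger rationality result (which rests on the unambiguous decomposition you invoke), then collapsing along total degree using $|\alpha|=n+1$ on $P_i^{(n)}$. The only difference is that the paper first asserts rationality of the multivariate series $F_i(x)=\sum_{\alpha\in P_i}x^\alpha$ and then substitutes $x_1=\cdots=x_m=t$, whereas you specialize on each unambiguous linear set separately; this makes the substitution explicitly legitimate, since each denominator becomes $\prod_j(1-t^{|p_j|})$ with $|p_j|\ge 1$.
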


\begin{proof}
By Proposition~\ref{prop:parikh-semilinear}, the Parikh image
\[
P_i=\Psi\bigl(L_i\bigr)
\]
is semilinear. A classical result of Eilenberg and Sch\"utzenberger \cite{eilenberg-schutzenberger1969}, together with the exposition in \cite{stanley2012}, implies that the multivariate generating function
\[
F_i(x_1,\ldots,x_m)
=
\sum_{\alpha\in P_i}x^\alpha
\]
is rational.

For each $n\geq0$, let
\[
P_i^{(n)}=\Psi\bigl(L_i^{(n)}\bigr).
\]
Every vector
\[
\alpha=(\alpha_1,\ldots,\alpha_m)\in P_i^{(n)}
\]
satisfies
\[
\alpha_1+\cdots+\alpha_m=n+1,
\]
since every trajectory of length $n$ visits exactly $n+1$ states.

After specializing
\[
x_1=\cdots=x_m=t,
\]
each occupation vector in $P_i^{(n)}$ contributes the monomial $t^{n+1}$. Since there are exactly
\[
a_i(n)=|P_i^{(n)}|
\]
such vectors,
\[
F_i(t,\ldots,t)
=
\sum_{n\geq0}a_i(n)t^{n+1}
=
tA_i(t).
\]
Hence
\[
A_i(t)=t^{-1}F_i(t,\ldots,t),
\]
which is rational because $F_i$ is rational.
\end{proof}

The rationality of $A_i(t)$ shows that occupation-pattern growth is far from arbitrary. In particular, the sequence $a_i(n)$ eventually satisfies a linear recurrence with constant coefficients. Determining whether its growth is polynomial, quasi-polynomial or exponential requires a finer analysis of the poles of $A_i(t)$. Their relation to the combinatorial structure of the support graph remains an interesting direction for future investigation.

\begin{problem}[Asymptotic Classification of Occupation-Pattern Growth]
\label{prob:asymptotic-classification}

Classify the asymptotic behaviour of
\[
a_i(n)=\#\Psi\bigl(L_i^{(n)}\bigr)
\]
in terms of the combinatorial structure of the support graph.

More specifically, determine how the growth of $a_i(n)$ depends on the branching structure, the strongly connected components and the directed cycle structure of $G_B$. In particular:

\begin{enumerate}[label=\arabic*.]

\item For which classes of support graphs does
\[
a_i(n)\sim p_i(n)
\]
hold?

\item For which support graphs is $a_i(n)$ eventually polynomial or quasi-polynomial?

\item For which support graphs is $a_i(n)$ exponential?

\item How are the dominant poles of $A_i(t)$ related to the cycle geometry of the support graph?

\end{enumerate}

\end{problem}

These questions arise naturally from the constructions developed in this paper. Their resolution is expected to depend on the interaction between branching, recurrence and directed cycles, which together govern the geometry of the Parikh image and the asymptotic behaviour of occupation-pattern growth. Understanding these interactions appears to be a natural next step in the development of the theory.

\section{Illustrative Examples}
\label{sec:examples}

The following examples illustrate how occupation patterns capture structural features of Markov support dynamics that are not reflected by reachability or trajectory counts alone. Together, they show how the three complexity functions
\[
g_i(n),\qquad
p_i(n),\qquad
a_i(n),
\]
capture complementary aspects of the support dynamics.

\subsection{A Finite Irreducible Support Graph}
\label{subsec:irreducible-example}

Our first example illustrates the simplest occurrence of Parikh compression. Even for a small irreducible support graph, distinct admissible trajectories may determine the same occupation pattern, showing that occupation-pattern growth and trajectory growth need not coincide.

Consider the support graph whose adjacency matrix is
\[
B=
\begin{pmatrix}
1&1&1&0\\
0&0&1&1\\
0&1&0&0\\
1&0&0&1
\end{pmatrix}.
\]

Starting from state $1$,
\[
A_0(1)=\{1\},\qquad
A_1(1)=\{1,2,3\},\qquad
A_2(1)=\{1,2,3,4\},
\]
so that the reachability process stabilizes after two steps.

The corresponding support graph is shown in Figure~\ref{fig:irreducible-support}.

\begin{figure}[ht]
\centering
\begin{tikzpicture}

\node[state] (1) at (0,2) {$1$};
\node[state] (2) at (3,2) {$2$};
\node[state] (3) at (3,0) {$3$};
\node[state] (4) at (0,0) {$4$};

\draw[graph edge,loop above] (1) to (1);
\draw[graph edge,loop below] (4) to (4);

\draw[graph edge] (1) -- (2);
\draw[graph edge] (1) -- (3);
\draw[graph edge,bend left=18] (2) to (3);
\draw[graph edge,bend left=18] (3) to (2);
\draw[graph edge,bend left=25] (2) to (4);
\draw[graph edge] (4) -- (1);

\end{tikzpicture}
\caption{Support graph of the four-state irreducible example.}
\label{fig:irreducible-support}
\end{figure}

At length two there are six admissible trajectories,
\[
1\to1\to1,\quad
1\to1\to2,\quad
1\to1\to3,\quad
1\to2\to3,\quad
1\to2\to4,\quad
1\to3\to2,
\]
whose occupation vectors are
\[
(3,0,0,0),\;
(2,1,0,0),\;
(2,0,1,0),\;
(1,1,1,0),\;
(1,1,0,1),\;
(1,1,1,0).
\]

The trajectories
\[
1\to2\to3
\quad\text{and}\quad
1\to3\to2
\]
are distinct but have the same occupation pattern. Hence
\[
g_1(2)=4,\qquad
p_1(2)=6,\qquad
a_1(2)=5.
\]

This example already exhibits the three levels of support complexity introduced in the paper. Reachability stabilizes after two steps, whereas both trajectory complexity and occupation-pattern complexity continue to evolve. The inequality
\[
a_1(2)<p_1(2)
\]
is the first manifestation of Parikh compression.

\subsection{Reducible Chains}
\label{subsec:reducible-example}

Our second example illustrates how occupation patterns reflect the classical decomposition of a Markov chain into transient and recurrent classes.

Consider the reducible support graph shown in Figure~\ref{fig:reducible-support}. States $1$ and $2$ are transient, while $\{3,4\}$ forms a closed recurrent class.

\begin{figure}[ht]
\centering
\begin{tikzpicture}[node distance=2.2cm]
\node[state] (1) {$1$};
\node[state,right of=1] (2) {$2$};
\node[recurrent state,right of=2] (3) {$3$};
\node[recurrent state,right of=3] (4) {$4$};

\draw[graph edge,loop above] (1) to (1);
\draw[graph edge] (1) to (2);
\draw[graph edge,bend left=18] (1) to (3);
\draw[graph edge] (2) to (3);
\draw[graph edge,loop above] (3) to (3);
\draw[two way] (3) to (4);
\draw[graph edge,loop above] (4) to (4);

\end{tikzpicture}
\caption{Support graph of the reducible chain.}
\label{fig:reducible-support}
\end{figure}

Starting from state $1$, every trajectory that enters the recurrent class $\{3,4\}$ remains there forever, since the class is closed. Consequently, every sufficiently long admissible trajectory is ultimately governed by the recurrent dynamics. From the language-theoretic viewpoint, the corresponding support language naturally decomposes into a transient prefix followed by a recurrent suffix.

Typical long admissible trajectories include
\[
1\to2\to3\to4\to3\to3\to4\to\cdots\to3\to4,
\]
with occupation vector
\[
(1,1,27,31),
\]
and
\[
1\to1\to3\to4\to4\to3\to4\to\cdots\to4\to3,
\]
with occupation vector
\[
(2,0,29,28).
\]

These examples illustrate the characteristic asymptotic behaviour of reducible support graphs. Once a trajectory enters the closed recurrent class, all subsequent visits remain within that class. Consequently, although transient states may be visited repeatedly before entering the recurrent component, any further growth of the occupation vector is entirely determined by the recurrent dynamics.

Thus, the asymptotic behaviour of the occupation vectors is governed by the recurrent component of the support graph. The transient part of the support graph determines the finite prefix preceding entry into the recurrent class, whereas all asymptotic growth takes place within the recurrent component. This provides a combinatorial counterpart of the classical decomposition into transient and recurrent states.

\subsection{Simple Random Walk on $\mathbb{Z}$}
\label{subsec:random-walk-example}

Our third example shows that the occupation-pattern construction is not intrinsically tied to finite state spaces. Although the theoretical results of this paper are established for finite Markov chains, the construction itself depends only on finite admissible trajectories. Every finite trajectory in a locally finite directed support graph determines a finite word and hence a well-defined Parikh vector. The present example therefore illustrates how the construction naturally extends beyond the finite-state setting.

Consider the simple random walk on the integers. Its directed support graph is locally finite and is given by
\[
m\to m+1,\qquad
m\to m-1.
\]

Starting from the origin,
\[
A_n(0)=\{-n,-n+1,\ldots,n\},
\]
and therefore
\[
g_0(n)=2n+1.
\]
On the other hand,
\[
p_0(n)=2^n,
\]
since every trajectory of length $n$ is uniquely determined by its sequence of $n$ left/right choices.

Thus, reachability growth is linear, whereas trajectory growth is exponential. Occupation patterns provide a third level of description by recording how visits are distributed among the states reached by the walk.

For every fixed $n$, only finitely many states may be visited, namely those contained in $A_n(0)$. Consequently, every trajectory of length $n$ determines a finitely supported occupation vector, or equivalently, a Parikh vector indexed by the finite set $A_n(0)$.

At length two, the admissible trajectories are
\[
0\to1\to2,\qquad
0\to1\to0,\qquad
0\to(-1)\to0,\qquad
0\to(-1)\to(-2).
\]
Each trajectory determines a distinct occupation vector, so no Parikh compression occurs at this level.

For longer trajectories, however, different excursions may generate the same occupation pattern. For example,
\[
0\to1\to0\to1\to0\to(-1)\to0
\]
and
\[
0\to1\to0\to(-1)\to0\to1\to0
\]
have the same occupation vector when restricted to the visited states. In the coordinates corresponding to the states $0$, $1$ and $-1$, both trajectories determine the vector
\[
(4,2,1).
\]

Although the trajectories are distinct as ordered paths, they become indistinguishable after applying the Parikh map. This example shows that Parikh compression is not restricted to finite-state Markov chains. It also arises naturally in locally finite support graphs, suggesting that the occupation-pattern framework developed here extends well beyond the finite-state setting.

\section{Geometric and Algebraic Perspectives}
\label{sec:perspectives}

The constructions developed in the previous sections naturally suggest broader geometric and algebraic questions. In this final section we briefly discuss some of these perspectives and indicate possible directions for future research.

\subsection{Occupation Geometry}
\label{subsec:occupation-geometry}

The occupation patterns introduced in the previous sections admit a natural geometric interpretation. For a fixed initial state $i$, every admissible trajectory determines an occupation vector
\[
\alpha(\gamma)=(\alpha_1,\ldots,\alpha_m)\in\mathbb{N}^m,
\]
where $\alpha_r$ denotes the number of visits to state $r$.

Since every trajectory of length $n$ visits exactly $n+1$ states, counting multiplicities,
\[
\sum_{r=1}^m\alpha_r=n+1.
\]
Hence every occupation vector arising from a trajectory of length $n$ lies on the discrete simplex
\[
\Delta_n=
\left\{
x\in\mathbb{N}^m:
\sum_{r=1}^m x_r=n+1
\right\}.
\]

The finite Parikh image
\[
P_i^{(n)}=\Psi\bigl(L_i^{(n)}\bigr)
\]
may therefore be viewed as a finite subset of this simplex. Consequently,
\[
a_i(n)=|P_i^{(n)}|
\]
counts the lattice points of $\Delta_n$ realized by admissible trajectories of length $n$.

A schematic illustration is shown in Figure~\ref{fig:occupation-simplex}. For three states, the simplex is naturally two-dimensional. The figure illustrates the general geometric viewpoint: in higher dimensions, the support graph selects a finite subset of the corresponding lattice simplex consisting of the realizable occupation vectors.

\begin{figure}[H]
\centering
\begin{tikzpicture}[scale=1.15]
\coordinate (A) at (0,0);
\coordinate (B) at (4.5,0);
\coordinate (C) at (2.25,3.897);

\draw[line width=0.8pt] (A) -- (B) -- (C) -- cycle;

\node[below left] at (A) {$(3,0,0)$};
\node[below right] at (B) {$(0,3,0)$};
\node[above] at (C) {$(0,0,3)$};

\node[left] at ($(A)!0.5!(C)$) {\scriptsize $x_2=0$};
\node[right] at ($(B)!0.5!(C)$) {\scriptsize $x_1=0$};
\node[below] at ($(A)!0.5!(B)$) {\scriptsize $x_3=0$};

\foreach \x/\y/\z in {3/0/0,2/1/0,1/2/0,0/3/0,2/0/1,1/1/1,0/2/1,1/0/2,0/1/2,0/0/3}{
  \coordinate (P\x\y\z) at ($\x/3*(A)+\y/3*(B)+\z/3*(C)$);
}

\foreach \x/\y/\z in {3/0/0,2/1/0,1/2/0,0/3/0,2/0/1,1/1/1,0/2/1,1/0/2,0/1/2,0/0/3}{
  \node[unrealized point] at (P\x\y\z) {};
}

\foreach \x/\y/\z in {3/0/0,2/1/0,2/0/1,1/1/1}{
  \node[simplex point] at (P\x\y\z) {};
}

\end{tikzpicture}
\caption{The discrete simplex $x_1+x_2+x_3=3$. Filled points represent a possible subset $P_i^{(2)}$ of realizable occupation vectors.}
\label{fig:occupation-simplex}
\end{figure}

From this viewpoint, the support graph imposes combinatorial constraints on which lattice points of the simplex can occur. Different support structures may therefore produce substantially different subsets of the same simplex. The transient part of the support graph determines the finite structure of the occupation geometry, whereas recurrent components determine its asymptotic geometry. In this sense, the occupation geometry may be viewed as a discrete geometric shadow of the support dynamics.

\subsection{Semilinear Geometry}
\label{subsec:semilinear-geometry}

The full Parikh image
\[
P_i=\bigcup_{n\ge0}P_i^{(n)}
\]
is semilinear by Proposition~\ref{prop:parikh-semilinear} \cite{esparza2011,parikh1966}. Consequently, it can be expressed as a finite union of linear sets
\[
v+\mathbb{N}u_1+\cdots+\mathbb{N}u_r.
\]

The semilinear structure also reflects the influence of directed cycles. Repeated traversals of a cycle add the same occupation increment each time, naturally producing arithmetic progressions in the Parikh image. This provides an intuitive geometric picture of why semilinear behaviour arises.

Accordingly, the large-scale geometry of $P_i$ is governed primarily by the cycle structure of the support graph. Transient regions contribute only finite prefixes, whereas recurrent classes generate the unbounded directions responsible for the asymptotic geometry of the occupation patterns. In this way, semilinearity provides a geometric description of how recurrent behaviour shapes the global structure of the set of occupation vectors.

\subsection{Algebraic Perspectives}
\label{subsec:algebraic-perspectives}

The occupation ideals introduced in Section~\ref{sec:occupation-ideals} provide a commutative-algebraic realization of the occupation patterns associated with admissible trajectories. For each fixed trajectory length $n$, the ideal
\[
J_n(i)
\]
encodes the finite set
\[
P_i^{(n)}
\]
of realizable occupation vectors. Its minimal generators are in one-to-one correspondence with these occupation patterns and therefore satisfy
\[
a_i(n)=\mu(J_n(i)).
\]

From this perspective, occupation ideals provide an algebraic representation of the combinatorial structure that remains after chronological information has been discarded. Unlike path-based constructions, which distinguish trajectories according to the order of their transitions, occupation ideals identify trajectories sharing the same occupation profile.

This representation places the construction within the framework of combinatorial commutative algebra and makes available a broad range of algebraic tools, including Hilbert functions, Hilbert series, Newton polyhedra, Betti numbers, projective dimension and Castelnuovo--Mumford regularity \cite{bruns1998,miller2005,stanley2012}. Understanding how these invariants reflect structural properties of the support graph and the geometry of occupation patterns appears to be a natural direction for further investigation.

Taken together, these constructions provide a combinatorial framework for the study of Markov support dynamics, opening the way to the systematic application of methods from formal language theory, discrete geometry and commutative algebra.

\section{Conclusion}
\label{sec:conclusion}

This paper introduces occupation patterns as a new combinatorial object associated with the support dynamics of discrete-time Markov chains. Starting from the directed support graph, admissible trajectories are viewed as a regular language whose Parikh image consists precisely of the occupation vectors realized by those trajectories. This construction connects Markov support graphs with formal language theory and combinatorial commutative algebra.

The resulting perspective naturally distinguishes three complementary levels of support complexity,
\[
\text{reachability}
\longrightarrow
\text{trajectory complexity}
\longrightarrow
\text{occupation-pattern complexity},
\]
quantified respectively by
\[
g_i(n),\qquad
p_i(n),\qquad
a_i(n).
\]
The occupation-pattern growth function
\[
a_i(n)=\#\Psi\bigl(L_i^{(n)}\bigr)=\mu(J_n(i))
\]
measures the distinct occupation patterns realized by admissible trajectories of length $n$, while the passage
\[
p_i(n)\longrightarrow a_i(n)
\]
quantifies the loss of chronological information induced by the Parikh map.

Beyond their combinatorial interpretation, occupation patterns admit complementary geometric and algebraic realizations. Geometrically, they appear as realizable lattice points in discrete simplices and form a semilinear subset of the corresponding lattice. Algebraically, they are represented by occupation ideals, making available methods from combinatorial commutative algebra for the study of Markov support dynamics.

The examples illustrate that occupation patterns capture structural features of the support graph that are not reflected solely by reachability or trajectory counts. In particular, the examples show that Parikh compression depends on the interaction between branching and directed cycles, while the semilinear geometry is governed by the recurrent structure of the support graph.

The framework developed here suggests several natural directions for further investigation, including the asymptotic classification of occupation-pattern growth, the geometric structure of Parikh images and the interpretation of algebraic invariants associated with occupation ideals. More broadly, it opens the way to applying methods from formal language theory, discrete geometry and commutative algebra to the study of Markov support dynamics.

\end{document}